\theoremstyle{plain}
\newtheorem{prop}{Proposition}
\theoremstyle{remark}
\newtheorem{exmp}{Example}
\DeclareMathOperator*{\argmin}{arg\,min}
\DeclareMathOperator*{\essinf}{ess\,inf}
\newcommand{\logmessage}[1]{\@latex@warning{#1}}
  \def\BibPath{../Bibliographie/}}{
      \def\BibPath{../../Bibliographie/}}{
          \def\BibPath{../../../Bibliographie/}}{
              \def\BibPath{../../../../Bibliographie/}}{
                  \def\BibPath{../../../../../Bibliographie/}}{
              		  \def\BibPath{../../../../../../Bibliographie/}}{
      \def\BibPath{}}}}}}}
\begin{document}
\title{Convective regularization for optical flow}
\author{Jos\'{e} A.~Iglesias\footnote{Computational Science Center, University of Vienna, Vienna, Austria}, Clemens Kirisits\footnote{Johann Radon Institute for Computational and Applied Mathematics (RICAM), Austrian Academy of Sciences, Linz, Austria}}
\maketitle

\begin{abstract}
We argue that the time derivative in a fixed coordinate frame may not be the most appropriate measure of time regularity of an optical flow field.
Instead, for a given velocity field $v$ we consider the convective acceleration $v_t + \nabla v v$ which describes the acceleration of objects moving according to $v$.
Consequently we investigate the suitability of the nonconvex functional $\|v_t + \nabla v v\|^2_{L^2}$ as a regularization term for optical flow.
We demonstrate that this term acts as both a spatial and a temporal regularizer and has an intrinsic edge-preserving property.
We incorporate it into a contrast invariant and time-regularized variant of the Horn-Schunck functional, prove existence of minimizers and verify experimentally that it addresses some of the problems of basic quadratic models.
For the minimization we use an iterative scheme that approximates the original nonlinear problem with a sequence of linear ones.
We believe that the convective acceleration may be gainfully introduced in a variety of optical flow models.
\end{abstract}

\section{Introduction}

\paragraph{Motivation.}
Optical flow is the apparent motion in a sequence of images and can be described by a velocity field. Using variational techniques to estimate this velocity field requires the design of an appropriate energy functional. Typically such a functional is a sum of two parts. The first part, sometimes called data term, measures the accuracy with which the velocity field describes the observable image motion. By ensuring consistency of the flow field, the second part, also called regularization term, gives information the data term cannot provide and thereby addresses the inherent ill-posedness of the optical flow problem. Naturally, related research has to a large degree been concerned with tuning the second term to optimally capture the characteristics of velocities of real-world image sequences.

Generally speaking regularization terms for optical flow fall into two categories: those which only penalize spatial derivatives of the velocity field, and those where derivatives in both space and time are penalized. The first category is by far the more popular one. Its developments have closely followed those of variational image restoration, because the regularity of images dictates to a significant degree the spatial regularity of velocity fields describing their motion. In other words, image discontinuities tend to coincide with motion discontinuities. These, it was found, can be appropriately dealt with using subquadratic or anisotropic regularization.

Research on the second category, i.e.\ that of spatiotemporal regularization, is relatively scarce. Again this is in analogy with the field of image processing, where the joint restoration of image sequences is quite under-represented as compared to the restoration of single frames. The inclusion of time derivatives has the obvious disadvantage of turning a series of decoupled 2D problems into one 3D problem. In the early days of computer vision limited computer memory and processing power was prohibitive to such approaches. Clearly this is no longer the case. On the contrary, recent publications have articulated and addressed the need for time-coupled models even for data with three instead of two space dimensions. See for example \cite{AmaMyeKel13,SchmShaScheWebThi13}.


\begin{figure}[h]
\centering{
\resizebox{55mm}{!}{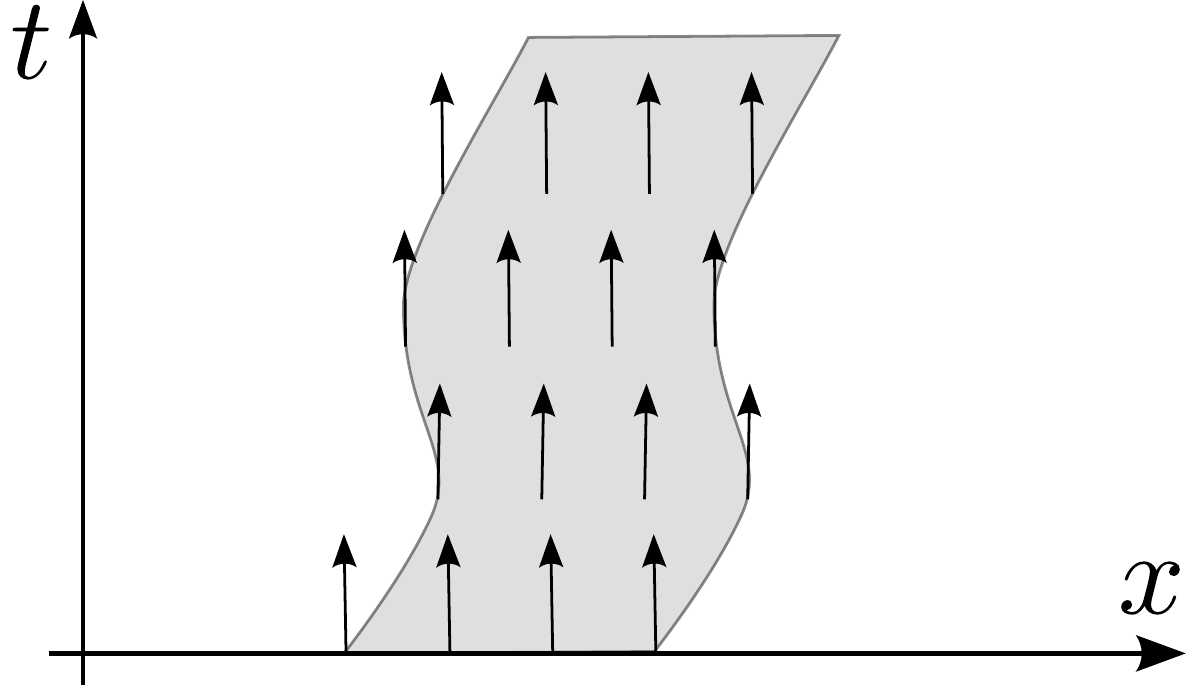}
\caption{Using partial time derivatives in flow regularization can blur across object boundaries. Here, the shaded area represents the space-time signature of a moving object and the time direction points both inside and outside of it at different points.}
\label{fig:timevectors}
}
\end{figure}

It is well-known that indiscriminate smoothing in space can lead to blurring at the boundaries of objects. This statement is equally true in the space-time domain, if we identify objects with their space-time signature (Figure \ref{fig:timevectors}). In particular, blind penalization of the partial time derivative of a velocity field can lead to a loss of accuracy, especially at the boundaries of \emph{moving} objects. Therefore we suggest to take the time derivative only along the movement of the object.

This derivative has a very natural physical interpretation. Let $v(t,x)$ be a time dependent velocity field on $\mathbb{R}^d$, and let $t \mapsto \gamma(t;x_0)$ be the trajectory of a certain object initially located at $x_0 \in \mathbb{R}^d$ that moves according to $v$. The translation into mathematical terms of this connection between $\gamma$ and $v$ is the following initial value problem
\begin{equation}\label{eq:ivp}
\begin{aligned}
	\gamma'(t;x_0) &= v (t,\gamma(t;x_0)) \\
	\gamma(0;x_0) &= x_0.
\end{aligned}
\end{equation}
Taking time derivatives on both sides of the differential equation above gives a formula in terms of $v$ for the acceleration of the object moving along $\gamma$
\begin{align*}
	\gamma''(t;x_0)  &= \frac{d}{dt} v (t,\gamma(t;x_0)) \\
					&= \frac{\partial}{\partial t} v(t,\gamma(t;x_0)) + \nabla v (t,\gamma(t;x_0)) \, v(t,\gamma(t;x_0))
\end{align*}
This expression, which is sometimes called \emph{convective acceleration}, is to be contrasted with the partial time derivative of $v$ evaluated at $(t,\gamma(t;x_0))$
\begin{equation*}
	\frac{\partial}{\partial t} v (t,\gamma(t;x_0)),
\end{equation*}
which has a notably different physical meaning. In this article we argue that in some situations the former can be more appropriate for the regularization of optical flow.

This work is devoted to investigating the suitability of the functional
\begin{equation}\label{eq:convterm}
	v \mapsto \|v_t + \nabla v v \|^2_{L^2}
\end{equation}
as a smoothness term for optical flow. We believe a sensible way to do so is to include this term into a simple optical flow model and to compare the results with those of the same model without the new term. The reference model we choose for this task is a contrast invariant and time-regularized version of the Horn-Schunck functional \cite{HorSchu81}. Incorporating \eqref{eq:convterm} into our model comes, however, at the cost of leading to a nonconvex functional and to nonlinear optimality conditions. Numerically, we approximate the problem by a sequence of convex quadratic ones.

We stress that it is not the aim of the proposed model to compete in the highest ranks of motion estimation benchmarks. Instead we want to point out a certain aspect of optical flow regularization, which we feel has not received its due amount of attention. Even though it leads to a more challenging variational problem, we demonstrate that a reasonable approximation is numerically tractable and pays off in terms of accuracy.

\paragraph{Related work.}
Horn and Schunck \cite{HorSchu81} are generally credited with having laid the groundwork for variational optical flow models. Penalizing the squared $L^2$-norms of the spatial derivatives, their model regularizes the optical flow isotropically as well as homogeneously over the image domain. In \cite{Nag83} Nagel proposed to suppress smoothing across image discontinuities by only penalizing the derivatives of the vector field along the level lines of the image. This idea was subsequently extended to the space-time domain in \cite{Nag90}.

A different route to spatiotemporal regularization was taken by Weickert and Schn\"{o}rr. In \cite{WeiSchn01b} they used an isotropic but essentially subquadratic regularizer. While still convex this approach leads to nonlinear optimality conditions. In the survey \cite{WeiSchn01a} the same authors classify convex spatial as well as spatiotemporal regularizers for optical flow. According to their nomenclature our approach would classify as anisotropic and flow-driven. It is, however, fundamentally different from the regularizers considered there, not only because of nonconvexity. In \cite{WeiSchn01a} the anisotropy of flow-driven regularizers is determined by the Jacobian of the velocity field, whereas in our approach it is determined by the velocity itself.

Chaudhury and Mehrotra pursue an interesting trajectory-based regularization strategy \cite{ChaMeh95}. Motivated by the principle of least action and the inertia of motion they postulate that both length and curvature of trajectories of moving objects should be minimal. There is a close connection between a trajectory's curvature and the convective acceleration of an object moving along that trajectory. We discuss this connection in the next section. More recently, Salgado and S\'{a}nchez \cite{SalSan07} as well as Volz and coauthors \cite{VolBruValZim11} have proposed time-discrete models using explicit trajectorial regularization. The present article tries to capture the essence of trajectorial regularization for optical flow in an entirely continuous setting.

Finally, we note that for the problem of image sequence reconstruction it is not uncommon to regularize the sequence along the optical flow, which can be either precomputed or estimated simultaneously. This derivative along the optical flow is just the convective derivative of the image sequence. See
\cite{CocChaBla03,PreDroGarTelRum08} for example.

\paragraph{Outline.} In Section \ref{sec:model} we introduce our model. In Sec.~\ref{sec:conacc} we discuss the convective acceleration of vector fields. Section \ref{sec:conreg} contains properties of the functional $\|v_t + \nabla v v \|^2_{L^2}$ as a regularization term. The last part of Sec.~\ref{sec:model} treats our choice of data term. Section \ref{sec:numerics} is dedicated to discussing our numerical minimization approach. Finally, we present experimental results in Section \ref{sec:experiments}.

\section{Model}\label{sec:model}
\subsection{Convective acceleration}\label{sec:conacc}
\paragraph{Notation.} Let $\Omega\subset\mathbb{R}^2$ be a bounded Lipschitz domain, $T>0$, and set $ E = (0,T) \times \Omega$. We denote points in $\Omega$ by $x=(x^1,x^2)^\top$. Let $\phi: E \to \Omega$ be a flow on $\Omega$: That is, for fixed $t \in (0,T)$ the map $\phi(t,\cdot)$ is a diffeomorphism of $\Omega$, while for fixed $x_0\in \Omega$ the trajectory described by $\phi(\cdot,x_0)$ is smooth. The vector field on $E$ that gathers the velocities of all trajectories associated to $\phi$ is defined by
\begin{equation}\label{eq:velocity}
	u(t,x) = \phi_t(t,x_0) \Big|_{x_0 = \phi^{-1}(t,x)}
\end{equation}
where $\phi_t$ is the partial derivative of $\phi$ with respect to its first argument and $\phi^{-1}(t,x)$ denotes the inverse of $\phi(t,\cdot)$ evaluated at $x$. Notice that $u(t,x)$ describes the velocity of the trajectory passing through $x$ at the time $t$, so $u(s,x)$ in general corresponds to a different trajectory, if $s \neq t$.


Let $f : E \to \mathbb{R}^N$ be a possibly vector-valued quantity on $E$. We define the convective derivative of $f$ along $\phi$ as
\begin{equation} \label{eq:conder}
\begin{aligned}
	D_u f(t,x)	&= \frac{d}{dt} f(t,\phi(t,x_0)) \Big|_{x_0 = \phi^{-1}(t,x)} \\
				&=	f_t(t,x) + \nabla f(t,x) u(t,x).
\end{aligned}
\end{equation}
Here $\nabla f = (f_{x^1}, f_{x^2}) \in \mathbb{R}^{N \times 2}$ is the spatial gradient of $f$. Notice that the convective derivative only depends on $u(t,x)$, thereby justifying the notation $D_u f$. Using the notation $\bar \nabla f = (f_t, \nabla f)$ and $\bar u = (1, u^\top)^\top$ we can write $D_u f = \bar \nabla f \bar u$. If we set $f = u$, the resulting vector field
\begin{equation}\label{eq:conacc}
	D_u u = u_t + \nabla u u = \bar \nabla u \bar u
\end{equation}
is called the \emph{convective acceleration} of the flow $\phi$.

\paragraph{Flows with vanishing convective acceleration.}
When proposing a new regularization term, it is always helpful to know when it is minimal. Therefore, we briefly discuss what conditions must be met so that a vector field $u$ satisfies
\begin{equation}\label{eq:burgers}
	D_u u = 0, \quad \text{for all } (t,x) \in E,
\end{equation}
and give a few examples afterwards. Intuitively $D_u u = 0$ means that $u$ satisfies a ``flow constancy condition". This is to be understood as an analogy to the widely known \emph{brightness constancy condition}, which is frequently used to derive the optical flow equation. Note that with the notation introduced in the previous paragraph, for an image sequence $f$, the optical flow equation reads $D_u f = 0$.

From the definition of the convective derivative \eqref{eq:conder} and \eqref{eq:velocity} it follows that
\begin{equation*}
	D_u u (t,\phi(t,x_0)) = \phi_{tt} (t,x_0),
\end{equation*}
for all $x_0 \in \Omega$ and $t\in(0,T)$. Now fix an $x_0 \in \Omega$ and, for simplicity, denote the trajectory $\phi(\cdot,x_0)$ by $\gamma(\cdot)$, so that we have $D_u u = \gamma''$. Assuming that $\gamma'$ does not vanish on $(0,T)$, we can write
\begin{align*}
	\gamma'' 	&= \frac{d}{dt} \left( \left|\gamma'\right| \frac{\gamma'}{\left|\gamma'\right|} \right) \\
				&= \frac{d \left|\gamma' \right| }{dt} \frac{\gamma'}{\left|\gamma'\right|} + \left|\gamma'\right| \frac{d}{dt} \frac{\gamma'}{\left|\gamma'\right|}.
\end{align*}
Since the unit tangent vector $\gamma'/|\gamma'|$ is always perpendicular to its derivative, the convective acceleration $\gamma''$ is a sum of two mutually orthogonal vectors. Such a sum can only vanish, if both vectors vanish. Due to the assumption that $\gamma' \neq 0$, we conclude that
\begin{align*}
	\frac{d \left|\gamma' \right| }{dt} &= 0, \quad \text{and} \\
	\frac{d}{dt} \frac{\gamma'}{\left|\gamma'\right|} &= 0.
\end{align*}
This proves the quite intuitive fact that, if a flow has vanishing convective acceleration, then all its trajectories must be straight lines and have constant speed. Below we give a few examples of such vector fields.

\begin{exmp}\label{ex:zeroconacc}
	Let $g:\mathbb{R}\to\mathbb{R}$ be a differentiable function. The two vector fields $v_1,v_2: E \rightarrow \mathbb{R}^2$ defined by
	\begin{equation*}
		v_1(t,x^1,x^2) = \begin{pmatrix}0 \\ g(x^1) \end{pmatrix}, \quad
		v_2(t,x^1,x^2) = \begin{pmatrix} g(x^2) \\ 0 \end{pmatrix}
	\end{equation*}
	do not depend on time and satisfy $\nabla v_i v_i = 0$. Thus, by formula \eqref{eq:conacc} they have vanishing convective acceleration. Their integral curves are parallel to the $x_2$- and $x_1$-axes, respectively.
\end{exmp}

\begin{exmp}
	Let $0 \notin \Omega$. Denote by $e_r$ the unit vector in radial direction and let $g:\mathbb{R}\to\mathbb{R}$ be a differentiable function with period $2\pi$. The vector field $v:E\to\mathbb{R}^2$ which in polar coordinates is given by
	\begin{equation*}
		v(t,r,\varphi) = g(\varphi) e_r
	\end{equation*}
	has zero convective acceleration. In contrast to the previous example the integral lines of $v$ are not all mutually parallel.
\end{exmp}

\begin{exmp}\label{ex:burgers}
	The vector fields in the previous examples were all constant in time. Time-dependent examples can be constructed from time-dependent solutions $u$ of the inviscid Burgers' equation in one space dimension (\cite{Eva10}, Section 3.4). That is, let $u:E\to\mathbb{R}$ solve \eqref{eq:burgers} with $E = (0,T) \times \mathbb{R}$. Then
	\begin{equation*}
		v_1(t,x^1,x^2) = \begin{pmatrix} u(t,x^1) \\ 0 \end{pmatrix}, \quad
		v_2(t,x^1,x^2) = \begin{pmatrix} 0 \\ u(t,x^2) \end{pmatrix}
	\end{equation*}
	satisfy $D_{v_i} v_i = 0$ while having partial time derivatives that do not vanish identically. As in the first example the integral curves are mutually parallel straight lines. The main difference, however, is that two trajectories passing through the same point in space but at different times, might do so at different speeds.
\end{exmp}

\paragraph{Relation to curvature.}
Given a curve $\gamma : (0,T) \to \Omega$, we can define its curvature as the normal part of the arclength derivative of the unit tangent vector
\begin{align*}
	\kappa_\gamma	= \left( \frac{\gamma'}{|\gamma'|}\right)^\perp \cdot \frac{1}{|\gamma'|} \frac{d}{dt} \frac{\gamma'}{|\gamma'|}
					= \frac{\gamma'^\perp \cdot \gamma''}{|\gamma'|^3},
\end{align*}
where $(a,b)^{\top\perp} = (-b,a)^\top$. If $\gamma$ is arc length parametrized, that is $|\gamma'|\equiv 1$, then the expression simplifies to $\kappa_\gamma = |\gamma ''|$,  because in that case $\gamma'^\perp = \gamma''/|\gamma''|$.

For a given $x_0 \in \Omega$ we can apply the formula above to the trajectory $t \mapsto \phi(t, x_0)$ of a flow $\phi$. Recall that in this case $\gamma'$ becomes $u$ and $\gamma''$ becomes $D_u u$. Thus we get
\begin{equation*}
	\kappa_{\phi(\cdot,x_0)} = \frac{u^\perp \cdot D_uu}{|u|^3}.
\end{equation*}
In particular, whenever $|u(t,\phi(t,x_0))| \equiv 1$ on some subinterval of $(0,T)$, then the norm of the convective acceleration $|D_u u|$ coincides with the absolute value of the curvature of $\phi(\cdot, x_0)$ on this subinterval
\begin{equation*}
	\kappa_{\phi(\cdot,x_0)} = \pm|D_u u|.
\end{equation*}


\subsection{Convective regularization}\label{sec:conreg}
In this section we study some properties of the functional
\begin{equation}\label{eq:conaccreg}
	u \mapsto \frac{1}{2}\|D_u u\|^2_{L^2}
\end{equation}
as a regularization term for optical flow. By $\| \cdot \|_{L^2}$ we mean the norm of $L^2(E,\mathbb{R}^2)$. Below we continue to use this shorthand whenever convenient.

\paragraph{Interpretation of the convective term.}
Functional \eqref{eq:conaccreg} has two interpretations, one as a smoothing term and and another one as a projection term. Recalling the bar-notation we introduced in Section \ref{sec:conacc}, the integrand of \eqref{eq:conaccreg} can be written as a quadratic form in the partial derivatives of $u$
\begin{equation}\label{eq:sqrnormconvacc}
	|D_u u|^2 = \bar\nabla u^1 \bar u \bar u^\top (\bar\nabla u^1)^\top + \bar\nabla u^2 \bar u \bar u^\top (\bar\nabla u^2)^\top.
\end{equation}
Observe that the diffusion tensor
\begin{equation*}
	\bar u \bar u^\top = \begin{pmatrix}1 & u^\top \\ u & uu^\top \end{pmatrix}
\end{equation*}
is a projection matrix onto the span of $\bar u$ composed with a scaling by $|\bar u|^2$. Therefore, minimization of \eqref{eq:conaccreg} leads to smoothing of the vector field $u$ only in direction $\bar u$, where precedence is given to regions where the magnitude of $u$ is relatively large. Clearly, since $\bar u \neq (1,0,0)^\top$ in general, the proposed convective regularization term is not a purely temporal regularizer, but it also enforces spatial smoothness of the vector field in a way that is consistent with the motion. Figure \ref{fig:flowvectors} illustrates this behavior.

\begin{figure}[h]
\centering{
\resizebox{55mm}{!}{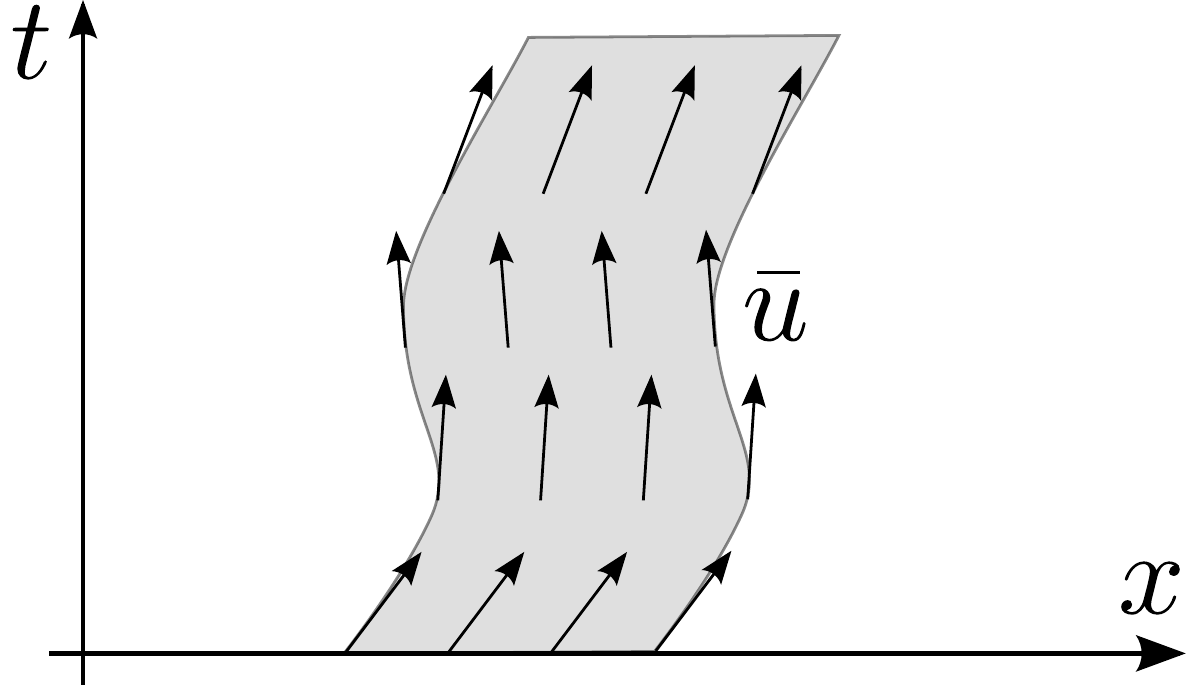}
\caption{The shaded area represents the space-time signature of a moving object. The arrows not only indicate the space-time velocity $\bar u$ of the object but also the direction of diffusion enforced by the tensor $\bar u \bar u^\top$. As opposed to the situation with the time derivative (Figure \ref{fig:timevectors}), object boundaries are respected.}
\label{fig:flowvectors}
}
\end{figure}

On the other hand, the roles of $\bar u$ and $\bar\nabla u^i$ can be exchanged so that \eqref{eq:sqrnormconvacc} rewrites as a sum of two quadratic forms in $\bar u$
\begin{equation}\label{eq:sqrnormconvacc2}
	|D_u u|^2 = \bar u^\top (\bar\nabla u^1)^\top \bar\nabla u^1 \bar u + \bar u^\top (\bar\nabla u^2)^\top \bar\nabla u^2 \bar u.
\end{equation}
The matrix $(\bar\nabla u^i)^\top \bar\nabla u^i$ projects onto the span of $\bar\nabla u^i$ and scales by the squared magnitude of $\bar\nabla u^i$. From this point of view minimizing \eqref{eq:conaccreg} amounts to forcing $\bar u$ to be orthogonal to the directions of greatest change of both $u^1$ and $u^2$, where precedence is again given whenever this change is relatively drastic. Note that the outer product of a vector with itself is always a positive semidefinite matrix, so that there can be no cancellations in \eqref{eq:sqrnormconvacc}, \eqref{eq:sqrnormconvacc2}.

The concurrence of the two different interpretations above is made explicit by looking at the Euler-Lagrange equation of \eqref{eq:conaccreg}, given by
\begin{equation}\label{eq:convectiveeullag}
	\left( (\nabla u^1)^\top \bar\nabla u^1 + (\nabla u^2)^\top \bar\nabla u^2 \right) \bar u
	- \bar \nabla \cdot \left( \bar \nabla u \bar u \bar u^\top \right) = 0.
\end{equation}
Here, the space-time divergence operator $\bar \nabla \cdot {}$ acts along rows.

\paragraph{Variational properties.}
In the following we adopt the notation $A \lesssim B$ whenever $A \le B$ holds up to multiplication by a positive constant. According to the basic estimate
\begin{equation*}
	\|D_u u\|_{L^2}^2 \lesssim \| u_t \|_{L^2}^2 + \|u\|^2_{L^\infty} \| \nabla u \|^2_{L^2}
\end{equation*}
a seemingly natural space over which to minimize, for a given image sequence $f$, the optical flow functional
\begin{equation*}
	\mathcal{F}(u) = \| D_u f \|_{L^2}^2 + \alpha\|D_u u\|_{L^2}^2
\end{equation*}
would be $X = H^1(E,\mathbb{R}^2) \cap L^\infty(E,\mathbb{R}^2)$. Now, in order to show existence of minimizers using the direct method \cite{Dac08}, we need to ensure existence of a minimizing sequence which converges. This is typically done by establishing a coercivity condition of the type
\begin{equation*}
	\mathcal{F}(u) \gtrsim \, \| u \|^p_X - b
\end{equation*}
for some $b\ge 0$ and $p\ge 1$. In the examples above we have seen, however, that there are vector fields $u$ with vanishing convective acceleration but nonzero partial derivatives. This implies that not even the inequality
\begin{equation*}
	\|D_u u\|_{L^2}^2 \gtrsim \, \| \bar \nabla u \|^p_{L^2} - b
\end{equation*}
holds true. Therefore, in order to guarantee existence of minimizers we consider the functional
\begin{equation}\label{eq:functional}
	\begin{gathered}
		\mathcal{E} : H^1(E,\mathbb{R}^2) \to [0,+\infty] \\
		\mathcal{E}(u) = \|\lambda D_u f\|^2_{L^2} + \alpha \|D_u u\|^2_{L^2} + \beta \|\bar \nabla u\|^2_{L^2},
	\end{gathered}
\end{equation}
where $\alpha, \beta >0$ and $\lambda : E \to \mathbb{R}_+$ is a weighting function which is specified in Sec.~\ref{sec:dataterm}. In addition we have to make the assumption introduced in \cite{Schn91a} that the partial derivatives of $f$ are linearly independent in $L^2(E)$, that is
\begin{equation}\label{eq:linind}
	\langle f_{x^1},f_{x^2} \rangle_{L^2} < \| f_{x^1} \|_{L^2}  \| f_{x^2} \|_{L^2}.
\end{equation}

\begin{prop}\label{thm:existence}
Let $f\in W^{1,\infty}(E)$ satisfy \eqref{eq:linind} and assume that $\lambda \in L^\infty(E)$ is such that $\essinf \lambda > 0$. Then the minimization of $\mathcal{E}$ over $H^1(E,\mathbb{R}^2)$ has at least one solution.
\end{prop}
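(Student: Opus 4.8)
The plan is to apply the direct method of the calculus of variations. Since $\mathcal{E} \ge 0$, a minimizing sequence $(u_n) \subset H^1(E,\mathbb{R}^2)$ exists with $\mathcal{E}(u_n) \le M$ for some $M$; I will (i) show $(u_n)$ is bounded in $H^1$, (ii) extract a weakly convergent subsequence $u_n \rightharpoonup u$, and (iii) prove that each of the three terms of $\mathcal{E}$ is weakly lower semicontinuous, so that $\mathcal{E}(u) \le \liminf_n \mathcal{E}(u_n) = \inf \mathcal{E}$. The energy bound immediately yields, using $\essinf \lambda > 0$, that $\|\bar\nabla u_n\|_{L^2}$ and $\|D_{u_n}f\|_{L^2}$ are bounded; since $f_t \in L^\infty(E) \subset L^2(E)$, the latter also bounds $\|f_{x^1}u_n^1 + f_{x^2}u_n^2\|_{L^2}$.

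The first genuine step is coercivity, i.e.\ bounding $\|u_n\|_{L^2}$. Here I would argue by contradiction following the technique of \cite{Schn91a}: if $t_n := \|u_n\|_{L^2} \to \infty$, set $v_n = u_n/t_n$. Then $\|v_n\|_{L^2} = 1$, while $\|\bar\nabla v_n\|_{L^2} \to 0$ and $\|f_{x^1}v_n^1 + f_{x^2}v_n^2\|_{L^2} \to 0$. Because $E=(0,T)\times\Omega \subset \mathbb{R}^3$ is a bounded Lipschitz domain, the embedding $H^1(E) \hookrightarrow\hookrightarrow L^2(E)$ is compact, so a subsequence of $(v_n)$ converges strongly in $L^2$ to a field $v$ with $\bar\nabla v = 0$; as $E$ is connected, $v$ is a constant vector $c$ with $|c|\,|E|^{1/2} = 1$, in particular $c \ne 0$. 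Passing to the limit in the data term (using $f_{x^i} \in L^\infty$) forces $c_1 f_{x^1} + c_2 f_{x^2} = 0$ in $L^2(E)$, which contradicts the strict Cauchy--Schwarz inequality \eqref{eq:linind}. Hence $\|u_n\|_{L^2}$ is bounded, $(u_n)$ is bounded in $H^1$, and up to a subsequence $u_n \rightharpoonup u$ in $H^1$ with $u_n \to u$ strongly in every $L^p(E)$, $p < 6$.

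It remains to check lower semicontinuity of the three terms along this subsequence. The data term is in fact continuous: $u \mapsto \lambda D_u f = \lambda f_t + \lambda (\nabla f)\,u$ is affine and continuous from $L^2$ to $L^2$ (as $\lambda, \nabla f \in L^\infty$), so $\lambda D_{u_n}f \to \lambda D_u f$ strongly and the squared norm converges. The term $\beta\|\bar\nabla u\|_{L^2}^2$ is the square of a norm, hence convex and weakly $H^1$ lower semicontinuous. The delicate term is the nonconvex convective one. Along the sequence $\alpha\|D_{u_n}u_n\|_{L^2}^2 \le M$, so $D_{u_n}u_n$ is bounded in $L^2(E,\mathbb{R}^2)$, and since $u_{n,t}$ is bounded in $L^2$, so is $(\nabla u_n)u_n = D_{u_n}u_n - u_{n,t}$. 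I would then identify its weak-$L^2$ limit: $u_{n,t} \rightharpoonup u_t$ in $L^2$, while the product-of-weak-and-strong argument --- $\nabla u_n \rightharpoonup \nabla u$ in $L^2$ together with $u_n \to u$ strongly in $L^3$ --- gives $(\nabla u_n)u_n \rightharpoonup (\nabla u)u$ weakly in $L^{6/5}$; combined with the $L^2$-bound this shows $(\nabla u)u \in L^2$ and $(\nabla u_n)u_n \rightharpoonup (\nabla u)u$ in $L^2$, so that $D_u u \in L^2$. Consequently $D_{u_n}u_n \rightharpoonup D_u u$ weakly in $L^2$, and weak lower semicontinuity of the $L^2$-norm yields $\|D_u u\|_{L^2} \le \liminf_n \|D_{u_n}u_n\|_{L^2}$. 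Combining the three estimates gives $\mathcal{E}(u) \le \liminf_n \mathcal{E}(u_n)$, so $u$ minimizes $\mathcal{E}$.

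I expect the main obstacle to be the lower semicontinuity of the nonconvex convective term: unlike a convex integrand it cannot be treated by standard weak lsc theorems, and passing to the limit in the quadratic product $(\nabla u_n)u_n$ hinges on the compact Sobolev embedding. The coercivity step is the other essential ingredient, and it genuinely requires the auxiliary term $\beta\|\bar\nabla u\|_{L^2}^2$, since the fields with $D_u u = 0$ but $\bar\nabla u \ne 0$ (e.g.\ Example \ref{ex:burgers}) show that the convective term alone cannot control the gradient.
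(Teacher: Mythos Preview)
Your argument is correct, and at the level of the direct method it follows the same outline as the paper (properness, coercivity, weak lower semicontinuity). The execution of the two nontrivial steps differs, however.

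For coercivity, the paper derives a direct quantitative estimate $\mathcal{E}(u)\gtrsim \|u\|_{H^1}^2 - b$ by splitting $u=u_E+(u-u_E)$, applying the Poincar\'e inequality to the zero-mean part, and using Schn\"orr's lemma $\|u_E\|_{L^2}^2 \lesssim \|u_E\cdot\nabla f\|_{L^2}^2$ (which encodes \eqref{eq:linind}) on the constant part. Your contradiction-and-normalization argument is a perfectly valid alternative and in fact distils the same two ingredients: compactness produces a nonzero constant limit, and \eqref{eq:linind} rules it out. The paper's version has the minor advantage of yielding an explicit a priori bound rather than merely boundedness.

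For the convective term the routes are genuinely different. You identify the weak $L^2$-limit of $D_{u_n}u_n$ by the weak--strong product argument (using $H^1(E)\hookrightarrow\hookrightarrow L^p$, $p<6$, in three space--time dimensions) and then invoke weak lower semicontinuity of the norm. The paper instead observes from \eqref{eq:sqrnormconvacc} that the integrand $|D_u u|^2=\bar\nabla u\,\bar u\bar u^\top(\bar\nabla u)^\top$ is \emph{convex in $\bar\nabla u$ for fixed $u$}, and applies a standard lower semicontinuity theorem for integrands of the form $F(x,u,\nabla u)$ convex in the last variable (Dacorogna, Theorem~3.23). So your claim that the term ``cannot be treated by standard weak lsc theorems'' is not quite right: nonconvexity in $u$ does not preclude this, and the paper exploits exactly that route. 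Your hands-on argument has the merit of being self-contained and of showing explicitly that $D_u u\in L^2$ along the minimizing sequence; the paper's version is shorter and makes transparent why the quadratic structure \eqref{eq:sqrnormconvacc} is the right way to view the term.
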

\begin{proof} We show that $\mathcal{E}$ is proper, coercive and lower semicontinuous with respect to the weak topology of $H^1(E,\mathbb{R}^2)$. That $\mathcal{E}$ is proper follows from nonnegativity and the assumptions on $f$ and $\lambda$.

Next we prove the coercivity estimate
\begin{equation*}
	\mathcal{E} (u) \gtrsim \| u \|^2_{H^1} - b
\end{equation*}
for $b\ge 0$. We obviously have $\mathcal{E} (u) \ge \beta \| \bar \nabla u \|^2_{L^2}$. So it remains to show that $\mathcal{E} (u) \gtrsim \| u \|^2_{L^2} - b$. Denoting the average of $u$ by $u_E = 1/|E| \int_E u$ we have
\begin{equation}\label{eq:chain}
\begin{aligned}
	\| u \|^2_{L^2}
		&\lesssim \| u_E\|^2_{L^2} + \|u - u_E \|^2_{L^2} \\
		&\lesssim \| u_E \cdot \nabla f\|_{L^2}^2 + \| \bar \nabla u \|_{L^2}^2 \\
		&\lesssim \| u \cdot \nabla f\|_{L^2}^2 + \| (u-u_E) \cdot \nabla f\|_{L^2}^2 + \| \bar \nabla u \|_{L^2}^2 \\
		&\lesssim \| D_u f\|_{L^2}^2 + \|f_t\|^2_{L^2} + \| \bar \nabla u \|_{L^2}^2 \\
		&\lesssim \mathcal{E}(u) + \|f_t\|^2_{L^2},
\end{aligned}
\end{equation}
which proves coercivity. There are three main ingredients in the above estimate. The first one is a quadratic variant of the triangle inequality
$$ \| v + w \|^2 \le 2 \| v \|^2 + 2 \|w \|^2. $$
The second one
$$ \| u_E\|^2_{L^2} \lesssim \| u_E \cdot \nabla f\|^2_{L^2} $$
uses the assumption of linear independence of $f_{x^1}$ and $f_{x^2}$ and is proved in \cite[p.~29]{Schn91a}. The third ingredient is the Poincar\'e inequality $\|u - u_E\|_{L^2} \lesssim \| \bar \nabla u \|_{L^2} $ (\cite{LieLos01}, Theorem 8.11). Also note that the fourth inequality in \eqref{eq:chain} requires the assumption $f\in W^{1,\infty}(E)$, while the last one uses $\essinf \lambda > 0$.

Let $\{u_n\} \subset H^1(E,\mathbb{R}^2)$ converge to $\hat u$ in the weak topology of $H^1(E,\mathbb{R}^2)$. In particular, $\bar{\nabla} u_n$ converges weakly in $L^2(E,\mathbb{R}^6)$ to the corresponding gradient $\bar{\nabla} \hat u$. By the compact embedding of $H^1(E,\mathbb{R}^2)$ into $L^2(E,\mathbb{R}^2)$, up to choosing a subsequence we may assume that $u_n$ also converges strongly in $L^2$ to $\hat u$. As is made apparent in (\ref{eq:sqrnormconvacc}), the expression $|D_u u|^2$ is convex in $\bar{\nabla} u$ for fixed $u$, and $\mathcal{E}$ is always nonnegative. We can therefore apply a standard result (\cite{Dac08}, Theorem 3.23) to obtain that $\mathcal{E}$ is lower semicontinuous.
\end{proof}
There is no reason to expect minimizers of $\mathcal{E}$ to be unique. The following example demonstrates that $\mathcal{E}$ is not convex in general.
\begin{exmp}
Let $v_1,v_2$ be the two vector fields from Example \ref{ex:zeroconacc} with $g$ being the identity, that is
\begin{equation*}
	v_1(t,x^1,x^2) = \begin{pmatrix}0 \\ x^1 \end{pmatrix}, \quad
	v_2(t,x^1,x^2) = \begin{pmatrix} x^2 \\ 0 \end{pmatrix}
\end{equation*}
They satisfy $D_{v_1} v_1 = D_{v_2} v_2 \equiv 0$, while
$$D_w w = \frac{1}{4} \begin{pmatrix} x^1 \\ x^2 \end{pmatrix},$$
where $w = (v_1+v_2)/2$. We conclude
\begin{equation*}
	0 = \|D_{v_1} v_1\|^2_{L^2} = \|D_{v_2} v_2\|^2_{L^2} < \|D_w w\|^2_{L^2} =\frac{T}{16} \int_\Omega |x|^2 \, dx
\end{equation*}
so that the functional $u \mapsto \|D_u u\|_{L^2}^2$ cannot be convex. This clearly implies that there are $\alpha, \beta, f$ such that $\mathcal{E}$ is not convex.
\end{exmp}

\subsection{Data term and contrast invariance.}\label{sec:dataterm}
Contrast invariance is a useful property of image processing operators. Let $h$ be a change of contrast, that is, a differentiable real function with $h' > 0$. An operator $A$ is called contrast invariant, if it commutes with all such contrast changes:
$$A \circ h (f) = h \circ A (f)$$
for all images $f$ (\cite{AlvGuiLioMor93}, Sec.~2.3).

A similar property can be postulated for operators estimating image motion, since an order-preserving rearrangement of the grey values of an image sequence should certainly not change velocities. Therefore, we call an operator $A$ that maps image sequences $f$ to velocity fields \emph{contrast invariant}, if it satisfies
\begin{equation*}
	A(f) = A \circ h (f).
\end{equation*}
However, a typical optical flow model of the form
\begin{equation}\label{eq:badmodel}
	f \mapsto \argmin_u \big\{ \|D_u f\|^p_{L^p} + \alpha \mathcal{R}(u) \big\}
\end{equation}
does not have this property. A simple counterexample is multiplication by a positive number $h(f) = cf$. For a model like \eqref{eq:badmodel} this change of contrast effectively amounts to dividing the regularization parameter $\alpha$ by $c^p$ and therefore definitely influences the result. There is, however, also a local effect of this contrast dependence, which is easy to confirm experimentally. Consider a scene with a dark background and two moving objects which are similar in shape and size and which move at similar velocities, but which have significantly different average brightnesses. This is precisely the case in the example of Figure \ref{fig:taxi}. Then, the velocities of the darker object, i.e.~the one with lower values of $f$, will be regularized more strongly than those of the brighter one, if a data term like the above is used.

\begin{figure}[ht!]
     \begin{center}
        \subfigure{
            \label{fig:taxi-input}
            \includegraphics[width=0.222\textwidth]{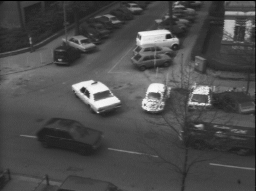}
        }
        \subfigure{
           \label{fig:taxi-unweighted-lr}
           \includegraphics[width=0.222\textwidth]{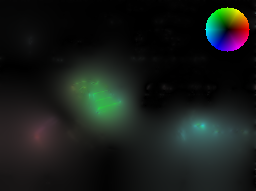}
        }
        \subfigure{
            \label{fig:taxi-unweighted}
            \includegraphics[width=0.222\textwidth]{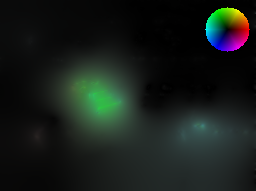}
        }
        \subfigure{
            \label{fig:taxi-weighted}
            \includegraphics[width=0.222\textwidth]{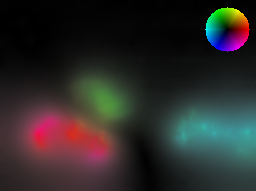}
        }
    \end{center}
    \caption{Effect of the data term weighting for $\mathcal{E}$ with $\alpha = 0$. When it is omitted, the data term inappropriately favors high-contrast objects. Shown are, from left to right, one frame of $f$, the corresponding results without weighting and $\beta=0.2$, without weighting and $\beta=0.5$, and weighted with $\omega = \sqrt{|\bar \nabla f |^2 + \epsilon^2}$ and $\beta=0.005$.}
   \label{fig:taxi}
\end{figure}

However, contrast dependence is not an inherent property of optical flow. Observe that, if $u$ solves $D_u f = 0$, then it also solves $D_u (h \circ f) = 0$, because $D_u (h \circ f) = h' D_u f$ and $h'>0$ by assumption. Therefore, it should rather be regarded a side-effect of the variational regularization approach. Contrast invariance can be restored by weighting the data term with $\lambda = 1/\omega$, where $\omega$ is a positively 1-homogeneous function that depends only on the first order derivatives of $f$. It then follows that
\begin{equation}\label{eq:continv}
	\frac{D_u (h \circ f)}{\omega(\bar \nabla (h \circ f))} = \frac{h' D_u f}{h' \omega(\bar \nabla f)} = \frac{D_u f}{\omega(\bar \nabla f)}.
\end{equation}

Such weighted data terms have already been used, although it seems with a different reasoning. In \cite{LaiVem98,ZimBruWei11}, for example, the weight was chosen to be $\omega = \sqrt{|\nabla f |^2 + \epsilon^2}$. In this paper we experiment with the weight $\omega = \sqrt{|\bar \nabla f |^2 + \epsilon^2}$. With this choice the data term essentially measures the orthogonal projection of $\bar u$ onto the span of $\bar \nabla f$, since
\begin{equation*}
	\frac{D_u f}{\omega} \approx \frac{\bar \nabla f}{|\bar \nabla f|} \cdot \bar u.
\end{equation*}
The effectiveness of this choice is illustrated in Figure \ref{fig:taxi}, in which the relative velocities of the vehicles are recovered correctly even though they have widely different contrast levels.

Finally, observe that weights which do not depend on first derivatives of $f$ (only), like $\omega = |f| + \epsilon$ for instance, do not yield full contrast invariance. While they work fine for the special case of rescaling $h(f) = cf$, they fail in general since the factor $h'$ in \eqref{eq:continv} does not cancel.

\section{Numerical solution}\label{sec:numerics}
We now seek to formulate a numerical approach for the minimization of the functional $\mathcal{E}$. As shown above, the convective regularization term is nonconvex and leads to nonlinear optimality conditions. Therefore, we propose an iterative scheme to arrive at an adequate flow field.
\paragraph{Iterative scheme.}
%

Consider the functional
\begin{equation*}\label{Ffunc}
	\mathcal{G}(u, w) = \|\lambda D_u f\|^2_{L^2} + \alpha \|D_w u\|^2_{L^2} + \beta \|\bar{\nabla} u\|^2_{L^2},
\end{equation*}
which satisfies $\mathcal{E}(u)=\mathcal{G}(u, u)$. For fixed $w \in L^{\infty}(E,\mathbb{R}^2)$, the mapping $u \mapsto \mathcal{G}(u, w)$ is a convex and quadratic functional, and $\mathcal{G}(u, w) < +\infty$ for any $u \in H^1(E)$. Our iterative method then reads as follows.
\begin{itemize}
\item Given an image sequence $f$ and parameters $\beta_0,\alpha_1, \beta_1$:
\item Find an initial guess $u_0$ by minimizing $\mathcal{G}(u, 0)$ over $u$, with $\alpha = 0, \beta = \beta_0$. This induces isotropic regularization in time and space, corresponding to a time-regularized variant of the Horn-Schunck model.
\item Compute $u_{k+1}$ by minimizing over the variable $u$ the functional $\mathcal{G}(u, u_k)$ with parameters $\alpha = \alpha_1, \beta = \beta_1$.
\end{itemize}
Each of these steps corresponds to finding an optical flow field using anisotropic regularization with the diffusion tensor $\alpha\bar{w}\bar{w}^\top + \beta \textrm{Id}$, where $\textrm{Id}$ is the $3 \times 3$ identity matrix. Therefore, this particular scheme is based on the diffusion interpretation of the convective regularization term, as reflected in \eqref{eq:sqrnormconvacc}. Indeed, upon fixing $w$ the first term in the complete Euler-Lagrange equation \eqref{eq:convectiveeullag} does not appear. Even though convergence of the iterative scheme is not guaranteed, in all of our experiments the solution $u_k$ stabilized after only a few iterations. Typically by $k=3$ or $4$ the difference between successive iterates became negligible.

By the properties of $\mathcal{G}$ listed above, each of these minimization problems can be performed by solving the corresponding linear Euler-Lagrange equation
\begin{equation}\label{eq:numericeullag}
	\bar \nabla \cdot \left( \bar{\nabla}u ( \alpha \bar{w} \bar{w}^\top + \beta \mathrm{Id}) \right) - \lambda^2 (\nabla f \cdot u)\nabla f^\top = \lambda^2 f_t \nabla f^\top,
\end{equation}
coupled with natural boundary conditions
\begin{equation*}
	n \cdot \left( \bar{\nabla}u ( \alpha \bar{w} \bar{w}^\top + \beta \mathrm{Id})\right) = 0 \quad \text{on } \partial E,
\end{equation*}
where $n$ is the outward normal to $E$. Our approach of discretizing \eqref{eq:numericeullag} is specified next. 

\paragraph{Discretization.}
The model has been discretized using a multilinear finite element formulation on a regular rectangular grid, so that each vertex in the grid corresponds to one pixel of one video frame. The grid used is such that the spacing between nodes in space is one, while the spacing in time (from frame to frame) may be smaller. We emphasize that the relation of the two grid step sizes implicitly controls the relative amount of regularization done in time and space, since the derivatives of the functional scale accordingly. Usually, using the same spacing will lead to results that are highly over-regularized in time. In the presented examples, the time step was divided by a factor of $8$ with respect to that of space.

Both the input $f$ and the unknown vector field $u$ are represented by their nodal values, leading to continuous piecewise multilinear functions. Since the Euler-Lagrange equation \eqref{eq:numericeullag} is actually a system for the two components $u^1, u^2$, the resulting linear system is represented by a block matrix, with the different blocks describing the interactions between components. The entries of these blocks are assembled from the integrals over different elements arising in the weak formulation, as per finite element practice \cite{QuaVal08}. Such integrals are computed exactly through an adequate Gauss quadrature. Note that boundary conditions do not need to be imposed explicitly.

A consequence of representing $f$ by a piecewise multilinear function is that the corresponding partial derivatives $f_{x^i}, f_t$ appearing in the right hand side $\lambda^2 f_t \nabla f^\top$ of \eqref{eq:numericeullag} do not belong to the same finite element space as $f$. However, our finite element matrices act on the nodal values of functions in this space, so an interpolation step is needed to obtain functions with the appropriate smoothness. To this end, we have performed an $L^2$ projection of these partial derivatives into the space of piecewise multilinear functions, whose nodal values are then multiplied together and used as the right hand side for our linear system. Computationally each projection requires the solution of an additional linear system, which needs to be done only once.

\section{Experiments}\label{sec:experiments}
In Figures \ref{fig:traffic}, \ref{fig:passat} and \ref{fig:passat-crop} we present some examples of flows computed through the numerical scheme of the previous section, approximating minimizers of 
\begin{equation*}
\mathcal{E}(u) = \|\lambda D_u f\|^2_{L^2} + \alpha \|D_u u\|^2_{L^2} + \beta \|\bar \nabla u\|^2_{L^2},
\end{equation*}
which are compared with corresponding results with basic isotropic regularization, that is, minimizers of 
\begin{equation*}
\mathcal{H}(u) = \|\lambda D_u f\|^2_{L^2} + \beta \|\bar \nabla u\|^2_{L^2}.
\end{equation*}
In all these examples results were computed over an interval of $30$ frames.

In the sequences used, the time derivative of the vector field at a given position is likely not appropriate, since isolated objects travel across the images. In the results with isotropic regularization, some of the disadvantages of basic quadratic regularizers are apparent. With lower regularization parameters (Figures \ref{fig:traffic-lowreg} and \ref{fig:passat-lowreg}) inner edges of rigidly moving objects are visible. With higher regularization parameter values, the support of the vector field is enlarged and interactions between distinct objects moving ensue, creating cancellations when the objects move in different directions (Figure \ref{fig:traffic-hireg}) or artificial reinforcement when they move in similar directions (Figure \ref{fig:passat-hireg}). However, with convective regularization both of these disadvantages can be avoided (Figures \ref{fig:traffic-convective} and \ref{fig:passat-convective}).

Additionally, one needs to choose the regularization constant $\epsilon$ for the weight $\sqrt{| \bar \nabla f|^2 +\epsilon^2}$ in the data term. Clearly, choosing a too large $\epsilon$ will diminish the effect of the weighting scheme, while choosing it too small may overemphasize very low contrast regions and potentially amplify noise. Since the 8-bit input images were normalized so that their values are of the form $k/255$ with $k=0\ldots 255$, we chose $\epsilon=0.01$. That is, $\epsilon$ is roughly of the size of the smallest nonzero derivative that may appear in the data.
\begin{figure}[ht!]
     \begin{center}
        \subfigure[Input]{
            \label{fig:traffic-input}
            \includegraphics[width=0.45\textwidth]{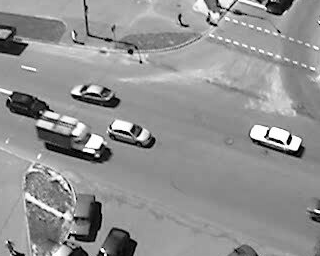}
        }
        \subfigure[Result with $\mathcal{H}$, $\beta=5e-4$]{
           \label{fig:traffic-lowreg}
           \includegraphics[width=0.45\textwidth]{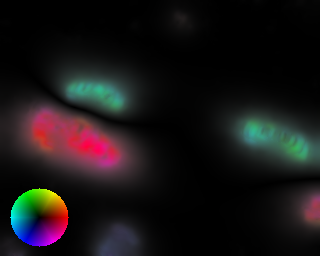}
        }\\
        \subfigure[Result with $\mathcal{H}$, $\beta=5e-3$]{
            \label{fig:traffic-hireg}
            \includegraphics[width=0.45\textwidth]{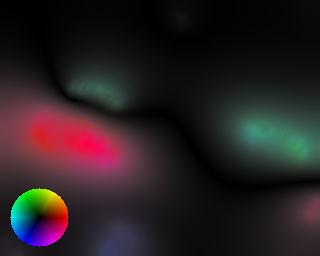}
        }
        \subfigure[Result with $\mathcal{E}$, $\alpha=5e-3, \beta=5e-4$]{
            \label{fig:traffic-convective}
            \includegraphics[width=0.45\textwidth]{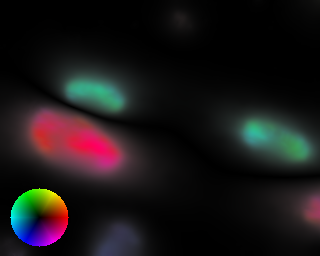}
        }
    \end{center}
    \caption{Comparison of isotropic and convective regularization.}
   \label{fig:traffic}
\end{figure}

\begin{figure}[ht!]
     \begin{center}
        \subfigure[Input]{
            \label{fig:passat-input}
            \includegraphics[width=0.45\textwidth]{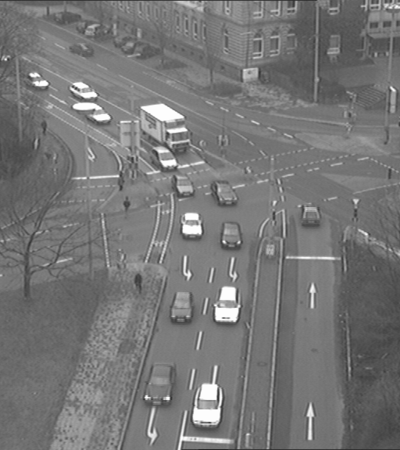}
        }
        \subfigure[Result with $\mathcal{H}$, $\beta=5e-5$]{
           \label{fig:passat-lowreg}
           \includegraphics[width=0.45\textwidth]{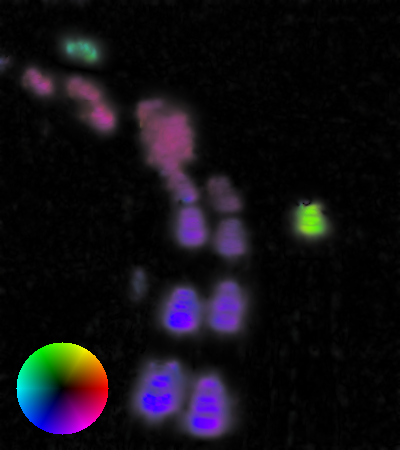}
        }\\
        \subfigure[Result with $\mathcal{H}$, $\beta=5e-4$]{
            \label{fig:passat-hireg}
            \includegraphics[width=0.45\textwidth]{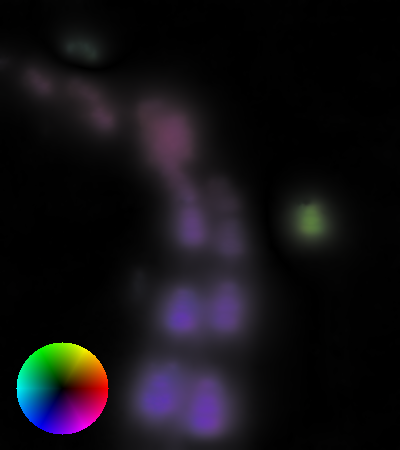}
        }
        \subfigure[Result with $\mathcal{E}$, $\alpha=1e-3, \beta=5e-5$]{
            \label{fig:passat-convective}
            \includegraphics[width=0.45\textwidth]{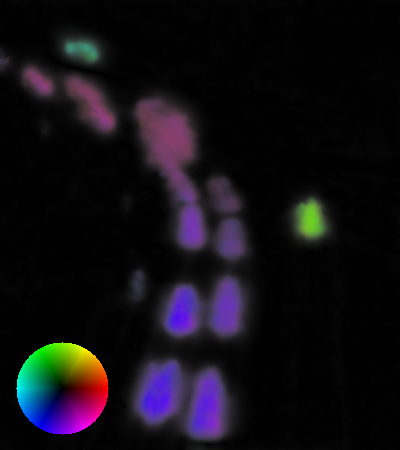}
        }
    \end{center}
    \caption{Comparison of isotropic and convective regularization.}
   \label{fig:passat}
\end{figure}

\begin{figure}[ht!]
     \begin{center}
        \subfigure{
            \label{fig:passat-input-crop}
            \includegraphics[width=0.22\textwidth]{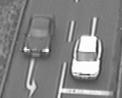}
        }
        \subfigure{
           \label{fig:passat-lowreg-crop}
           \includegraphics[width=0.22\textwidth]{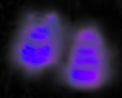}
        }
        \subfigure{
            \label{fig:passat-hireg-crop}
            \includegraphics[width=0.22\textwidth]{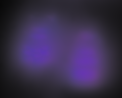}
        }
        \subfigure{
            \label{fig:passat-convective-crop}
            \includegraphics[width=0.22\textwidth]{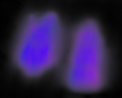}
        }
    \end{center}
    \caption{Cutout from Figure \ref{fig:passat}. The use of convective regularization allows to obtain nearly-constant optical flow field inside the objects without excessively enlarging the support.}
   \label{fig:passat-crop}
\end{figure}
In our experiments the parameter $\beta_1$ may be chosen quite small to preserve the boundaries of objects, while using a relatively large $\alpha_1$ for the convective regularization term. By using the convective derivative, the information from other frames is used in a consistent way to correctly fill the flow field inside objects even in the absence of texture. The initial isotropic parameter $\beta_0$ for starting the iterative scheme was chosen to be $\beta_0=\alpha_1$ in all cases.

\section{Conclusion}
In this article we presented an entirely continuous optical flow model that is based on the assumption that velocities should vary smoothly not at fixed points in space but along motion trajectories. The resulting regularization term has a variety of different interpretations. First, this term penalizes the proper acceleration of objects moving according to the velocity field. Second, it has a direct relation to the curvature of motion trajectories. Finally, when minimized it acts on the vector field not only as a projection term but also as an edge-preserving anisotropic regularizer.

We tested the convective regularization term by incorporating it into a reference optical flow model and comparing the results with and without the new term. In order to make the changes as apparent as possible and to avoid potential side effects we tried to keep the complexity of the reference model at a minimum. Therefore, the resulting functional \eqref{eq:functional} should mainly be viewed as a prototype of an optical flow model with convective regularization which can be improved upon by future research.

\paragraph{Acknowledgements.}
This work has been supported by the Austrian Science Fund (FWF) within the national research network Geometry + Simulation (project S11704, Variational Methods for Imaging on Manifolds). Additionally, JI acknowledges support from the FWF Doctoral Program Dissipation and Dispersion in Nonlinear PDEs (W1245). We wish to thank Enrique C.~Vargas for inspiring exchanges. Our finite element implementation was carried out in the \nolinkurl{Quocmesh} library,\footnote{\nolinkurl{http://numod.ins.uni-bonn.de/software/quocmesh/}} AG Rumpf, Institute for Numerical Simulation, Universit\"{a}t Bonn. For our experiments, we have used footage from the datasets of Lund University\footnote{\nolinkurl{http://www.tft.lth.se/video/co_operation/data_exchange/}} (sequence ``Conflicts-20100628-1344, camera 1") and Universit\"{a}t Karlsruhe\footnote{\nolinkurl{ftp://ftp.ira.uka.de/pub/vid-text/image_sequences/}} (sequences ``taxi" and ``dt\_passat\_part2").

\bibliographystyle{plain}
\bibliography{\BibPath strings,\BibPath articles,\BibPath books,\BibPath infmath,\BibPath infmath_books,\BibPath infmath_report,\BibPath infmath_talks,\BibPath infmath_theses,\BibPath inproceedings,\BibPath preprints,\BibPath proceedings,\BibPath theses,\BibPath unsubmitted,\BibPath websites}

\end{document}